\newtheorem{theorem}{Theorem}
\newtheorem*{theorem*}{Theorem}
\newtheorem{proposition}{Proposition}
\newtheorem{lemma}{Lemma}
\newtheorem{corollary}{Corollary}
\theoremstyle{remark}
\newtheorem{remark}{Remark}
\theoremstyle{definition}
\newtheorem{definition}{Definition}
\newcommand{\D}{\mathbb{D}}
\newcommand{\8}{\infty}
\newcommand{\h}{H(\D)}
\newcommand{\V}{\mathcal{V}}
\newcommand{\hh}{\mathcal{H}^2(\D)}
\newcommand{\z}{\mathcal{S}_0^2(\D)}
\newcommand{\sh}{\mathcal{S}^2(\D)}
\date{}
\begin{document}


\title{Invariant subspaces of the shift plus complex Volterra operator}

\author{\v{Z}eljko \v{C}u\v{c}kovi\'c, Bhupendra Paudyal\thanks{ This paper is based on a research which forms a part of the second author's University of Toledo Ph.D. Thesis written under the supervision of  Professor \v{Z}eljko \v{C}u\v{c}kovi\'c} }

\maketitle

\begin{abstract}
 {\small
 The lattice of closed invariant subspaces of the Volterra operator acting on $L^2(0,1)$ was completely described by Sarason \cite{Sa65}. On the other hand, he explicitly found the lattice of closed invariant subspaces of the shift plus Volterra operator on $L^2(0,1)$ in \cite{Sa74}. Inspired by Sarason's results, we find the lattice of closed invariant subspaces of the shift plus complex Volterra operator acting on the Hardy space. }  
\end{abstract}
\maketitle
\section{Introduction}
The invariant subspace problem is a well known problem in operator theory. The conjecture is that every bounded linear operator $T$ on a separable Hilbert space $H$ has a non-trivial closed invariant subspace (a closed linear subspace W of $H$ which is different from $\{0\}$ and $H$ such that $T(W) \subset W$).
Over the years it has been shown that many familiar classes of operators do have invariant subspaces. The goal of this paper is to characterize the lattice of closed invariant subspaces of the shift plus complex Volterra operator on the Hardy space. 
 
Let $\D$ be the unit disk of the complex plane and $\h$ be the space of holomorphic functions on the unit disk. We say that a holomorphic function $f(z)=\sum_{n=0}^\8 a_n z^n$ on the unit disk belongs to the Hardy space $\hh$, if its sequence of power series coefficients is square-summable: 
\[\hh=\{f\in\h:\sum_{n=0}^\8|a_n|^2<\8\}.\]
We define a norm on $\hh$ by 
\begin{equation} \label{HN1}
\|f\|^2_{\hh}= \sum_{n=0}^\8|a_n|^2.
\end{equation} 
It is well known that $\hh$ is a Hilbert space with the inner product \[ \langle f,g\rangle_{\hh} =\sum _{n=0}^\8 a_n \overline{b_n}\] for $f(z)= \sum_{n=0}^\8 a_n z^n$ and $g(z) =\sum_{n=0}^\8 b_n z^n$. 

  The operator defined on $\hh$ \[(M_zf)(z)= zf(z)\hspace{.5cm} \text{for}~ f\in\hh~\text{and}~ z\in\D\] is called  the shift operator.  The lattice of the shift operator acting on the Hardy space is completely described by Beurling's Theorem \cite{Be48}, and it is one of the  the most celebrated and widely used results. Let $L^2(0,1)$ be the space of square integrable functions on $(0,1)$. Sarason \cite{Sa65} characterized all closed invariant subspaces of the Volterra operator \[(Vf)(x)=\int_0^x f(y) dy\hspace{.5cm}\text{for}~ f \in L^2(0,1)~\text{and}~0< x\leq y< 1.\]  Aleman and Korenblum studied the complex Volterra operator in $\hh$ defined by \[(\V f)(z)=\int_0^z f(w) dw.\] Then they characterized the lattice of closed invariant subspaces of $\V$ in \cite{Al08}. While doing so they used the Beurling's Theorem. On the other hand, Sarason \cite{Sa74} studied the lattice of closed invariant subspaces of multiplication by $x$ plus Volterra operator, $M_x+V$ acting on $L^2(0,1)$. Montes-Rodriguez, Ponce-Escudero and Shkarin \cite{MO10} and Cowen, Gunatillake and Ko \cite{CO13} used the idea of Sarason to study the invariant subspaces of certain classes of composition operators on Hardy spaces.
  
  Following Sarason's work we are interested in characterizing the lattice of closed invariant subspaces of the shift plus complex Volterra operator on the Hardy space. Denote by $T$ the operator
\begin{equation}
(Tf)(z)=z f(z)+\int_0^z f(w) dw, \hspace{.5cm} \text{for}~ f\in \hh ~\text{and}~ z\in \D.
\end{equation}
Since the shift operator is an isometry and the complex Volterra operator is a contraction, $T$ is clearly bounded operator on the Hardy space. To prove our main result we use the space $\sh$ defined by 
 \[\sh =\{f\in \h: Df\in \hh\},\] where $D=\frac{d}{dz}$ is the differential operator. It is clear that if $Df$ is in  $\hh$, then $f$ belongs to $\hh$. The norm of $\sh$ is defined by  \begin{equation} \label{IN1}
 \|f\|^2_{\sh} = \|Df\|^2_{\hh }+\|f\|^2_{\hh}.
\end{equation} 
Corresponding inner product is given by \[\langle f,g\rangle_{\sh}=\langle Df,Dg\rangle_{\hh}+\langle f,g\rangle_{\hh}.\]  
Our work describes the lattice of closed invariant subspaces of $T$ acting on $\hh$. The main tool in our proof is Korenbljum's result from \cite{KO72}, in which he characterized all closed ideals of $\sh$. Korenbljum's paper, in turn, relates to the work of Rudin \cite{RU57} and his main theorem on the structure of the closed ideals of the disk algebra. The following result is known to the specialists but we could not find the proof in the literature. We include the proof for the sake of completeness.
\begin{proposition}\label{INP1}
The following statements are true:
\renewcommand\theenumi{\roman{enumi}}
\begin{enumerate}
\item $\sh \subset H^\8.$
\item $\sh$ is Banach algebra.
\item Polynomials are dense in $\sh$.
\end{enumerate}
\end{proposition}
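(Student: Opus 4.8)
The plan is to establish the three items in order, the only real ingredients being the Cauchy--Schwarz inequality, the elementary pointwise estimate on $\hh$, and the fact that $\hh$ is a module over $H^\8$. For (i), write $f(z)=\sum_{n=0}^\8 a_n z^n\in\sh$, so that $Df(z)=\sum_{n=1}^\8 n a_n z^{n-1}$ and $\|Df\|_{\hh}^2=\sum_{n=1}^\8 n^2|a_n|^2<\8$. Cauchy--Schwarz then gives
\[
\sum_{n=0}^\8|a_n|\le|a_0|+\Bigl(\sum_{n=1}^\8\frac{1}{n^2}\Bigr)^{1/2}\Bigl(\sum_{n=1}^\8 n^2|a_n|^2\Bigr)^{1/2}=|a_0|+\frac{\pi}{\sqrt6}\,\|Df\|_{\hh}<\8 ,
\]
so the Taylor series of $f$ converges absolutely on $\overline{\D}$; in particular $f\in H^\8$ (indeed $f$ lies in the disk algebra), with the estimate $\|f\|_\8\le|a_0|+\frac{\pi}{\sqrt6}\|Df\|_{\hh}\le\bigl(1+\tfrac{\pi}{\sqrt6}\bigr)\|f\|_{\sh}$, which I will reuse below.

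For (ii) I would first check completeness. If $(f_k)$ is Cauchy in $\sh$, then $(f_k)$ and $(Df_k)$ are Cauchy in $\hh$, hence converge in $\hh$ to functions $f$ and $g$; since $\hh$-convergence forces uniform convergence on compact subsets of $\D$ (via $|h(z)|\le\|h\|_{\hh}(1-|z|^2)^{-1/2}$) and since $D$ is continuous for uniform convergence on compacts, one gets $g=Df$, so $f\in\sh$ and $f_k\to f$ in $\sh$. Next, to see $\sh$ is closed under products, note that for $f,g\in\sh$ one has $D(fg)=(Df)g+f(Dg)$; since $f,g\in H^\8$ by (i) and since $\hh$ is a module over $H^\8$ with $\|\varphi h\|_{\hh}\le\|\varphi\|_\8\|h\|_{\hh}$, both summands lie in $\hh$, and combining this with $\|fg\|_{\hh}\le\|f\|_\8\|g\|_{\hh}$ and the norm estimate from (i) yields $\|fg\|_{\sh}\le C\|f\|_{\sh}\|g\|_{\sh}$ for an absolute constant $C$. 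As the constant function $1$ belongs to $\sh$, after the standard equivalent renorming $\sh$ becomes a unital Banach algebra.

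For (iii), given $f(z)=\sum_{n=0}^\8 a_n z^n\in\sh$, I would simply take the Taylor partial sums $p_N(z)=\sum_{n=0}^N a_n z^n$. Then $\|f-p_N\|_{\hh}^2=\sum_{n>N}|a_n|^2$ and $\|D(f-p_N)\|_{\hh}^2=\sum_{n>N}n^2|a_n|^2$ are tails of the convergent series $\sum|a_n|^2$ and $\sum n^2|a_n|^2=\|Df\|_{\hh}^2$, respectively, hence both tend to $0$; therefore $\|f-p_N\|_{\sh}\to0$ and polynomials are dense.

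The only point that warrants care, and the one I would write out explicitly, is the module property $\|\varphi h\|_{\hh}\le\|\varphi\|_\8\|h\|_{\hh}$ for $\varphi\in H^\8$ and $h\in\hh$; this follows from the standard characterization $\|h\|_{\hh}^2=\sup_{0<r<1}\frac{1}{2\pi}\int_0^{2\pi}|h(re^{i\theta})|^2\,d\theta$ together with the maximum principle applied to $\varphi$ on the circles $|z|=r$. Everything else reduces to the short computations indicated above, so I do not anticipate a genuine obstacle.
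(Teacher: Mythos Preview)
Your argument is correct throughout, but in two of the three parts you take a different route from the paper.

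For (i), the paper starts from the pointwise bound $|Df(z)|\le\|Df\|_{\hh}(1-|z|^2)^{-1/2}$ and integrates along the segment $[0,z]$ to obtain $|f(z)-f(0)|\le 2\|Df\|_{\hh}$, whereas you apply Cauchy--Schwarz directly to the Taylor coefficients and get absolute convergence of $\sum a_nz^n$ on $\overline{\D}$. Your version is shorter and in fact yields the slightly stronger conclusion that $\sh$ embeds in the Wiener algebra (hence in the disk algebra), not merely in $H^\8$; the paper's integral approach, on the other hand, generalizes more readily to settings where coefficient descriptions are unavailable.

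For (ii) the two arguments are essentially the same; you are actually a bit more careful than the paper in justifying that the $\hh$-limit of $Df_k$ equals $Df$. Both you and the paper note the need for an equivalent renorming to get the genuine Banach-algebra inequality.

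For (iii), the paper goes through dilations $f_q(z)=f(qz)$, shows $\|f-f_q\|_{\sh}\to0$ as $q\to1$, and then approximates each $f_q$ by polynomials. Your direct use of Taylor partial sums is considerably simpler here: because the $\sh$-norm is defined via the coefficient sequence $(\sqrt{n^2+1}\,a_n)$, the partial sums are exactly the orthogonal projections and convergence is immediate. The dilation argument is the one that transports to spaces without such an explicit orthonormal basis, but in the present situation your approach is the more efficient one.
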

\begin{proof}
\renewcommand\theenumi{\roman{enumi}}
\begin{enumerate}
 \item 
 Let $f \in \sh$, then $Df\in \hh$  and hence \[|Df(z)|\leq \dfrac{\|Df\|_{\hh} }{\sqrt{1-|z|^2}},~\hspace{.5cm}~\text{for~ all}~z\in\D.\] Now,
 \begin{align*}
| f(z)-f(0)|
 =&|\int_0^1 zDf(tz) dt| \\
  \leq &\int_0^1 |zDf(tz)|dt \\ 
 \leq &\|Df\|_{\hh} \int_0^1 \frac{|z|}{\sqrt{1-|tz|^2}} dt \\
 \leq &\|Df\|_{\hh} \int_0^1 \frac{|z|}{\sqrt{(1-|tz|)(1+t|z|)}} dt. 
 \end{align*}
 Since $1+t|z|\geq 1$, we have $\dfrac{1}{\sqrt{1+t|z|}}\leq 1$. 
 \begin{align} \label{IEqN1}
| f(z)-f(0)| \leq &\|Df\|_{\hh} \int_0^1 \frac{|z|}{\sqrt{1-|tz|}} dt \nonumber\\
\leq & 2 \|Df\|_{\hh}.
\end{align}
This clearly shows that $f$ belongs to $H^\8$, and hence $\sh\subset H^\8.$
\item To show $\sh$ is a Banach space under the above norm, suppose $\{g_n\}_0^\8$ is Cauchy in $\sh$ norm. Then, clearly $\{g_n\}$ and $\{Dg_n\}$ are Cauchy on $\hh$ norm. Since $\hh$ is a Banach space, $\{g_n\} $ converges to a holomorphic function $g\in \hh$ and $\{Dg_n\}$ converges to the function $Dg \in \hh$. Therefore $g\in\sh$, and hence $\sh$ is a Banach space under the given norm. Pointwise multiplication on $\sh$ form an algebra. For this, suppose that $f$ and $g$ are in $\sh$. 
\begin{align}\label{IEQN1}
\|fg\|^2_{\sh} =& \|D(fg)\|^2_{\hh }+\|fg\|^2_{\hh}\nonumber\\
=&\|gDf+fDg\|^2_{\hh}+\|fg\|^2_{\hh}\nonumber\\
\leq&\|gDf\|^2_{\hh}+2 \|gDf\|_{\hh}\|fDg\|_{\hh}+\|fDg\|^2_{\hh}+ \|fg\|^2_{\hh}.
\end{align}
Using \eqref{IEqN1}, we see that for any $f\in\sh$ 
\begin{align*}
\|f\|_\8\leq & 2\|Df\|_{\hh}+|f(0)|\\
\leq & 2\|Df\|_{\hh}+ 2\|f\|_{\hh}=2\|f\|_{\sh}.
\end{align*}
\text{Hence using \eqref{IEQN1}},
\begin{align*}
\|fg\|^2_{\sh}\leq&\|g\|^2_\8 \|Df\|^2_{\hh}+2 \|f\|_\8\|g\|_\8\|Df\|_{\hh}\|Dg\|_{\hh} \\ &+\|f\|^2_\8\|Dg\|^2_{\hh} + \|g\|^2_\8\|f\|^2_{\8} \\
\leq & 4\|g\|^2_{\sh} \|f\|^2_{\sh}+ 8\|g\|^2_{\sh}\|f\|^2_{\sh} + 4\|g\|^2_{\sh} \|f\|^2_{\sh}\\
\leq & 16 \|f\|^2_{\sh}\|g\|^2_{\sh}.\end{align*}
\begin{remark}
Clearly $\sh$ is an algebra with unit and with a norm $\|.\|_{\sh}$ under which it is a Banach space. Furthermore, we see that the multiplication is continuous in each factor separately. Therefore there exists a norm equivalent to $\|.\|_{\sh}$, for which $\sh$ is a Banach algebra (see \cite[page 212]{Ka04}). Keeping in mind the fact above, we do not differentiate the norm $\|.\|_{\sh}$ and its equivalent norm that makes $\sh$ a Banach algebra.
\end{remark}
 \item We want to show polynomials are dense in $\sh$.
 Given $f\in\sh$, let $f_q(z)=f(qz)$ be its dilation  and $Df_q(z)= qDf(qz)=q(Df)_q(z)$ be derivative of dilation where $0<q<1$. Each function $f_q$ is analytic in a larger disk, so it can be approximated uniformly on $\D$ by a sequence of holomorphic polynomials $P_q^n$, and hence $Df_q$ can be approximated uniformly on $\D$ by holomorphic polynomials $DP_q^n$. 
 So it will be enough to prove that $f$ can be approximated in $\sh$ by its dilation. That is to say $\|f-f_q\|_{\sh}\rightarrow 0$ as $ q\rightarrow 1$. This means that $\|f-f_q\|^2_{\hh}+\|Df-Df_q\|^2_{\hh}\rightarrow 0$ as $ q\rightarrow 1$. So finally it is enough to prove that $\|f-f_q\|^2_{\hh}\rightarrow 0$ and $\|Df-Df_q\|^2_{\hh}\rightarrow 0$ as $ q\rightarrow 1$. For this, let us assume \[\displaystyle{f(z)= \sum _{n=0}^\8 a_n z^n}.\] 
 Since $f\in\hh$, for all $\epsilon >0$,  we can choose a natural number $N$ large enough such that  
  \[\displaystyle{\sum _{n=N+1}^\8 |a_n|^2<\frac{\epsilon}{2}}.\]  
  Now choose $q_\epsilon\in (0,1)$ such that \[(1-q_\epsilon^N)^2\sum_{n=0}^N|a_n|^2<\frac{\epsilon}{2}.\]   
 Then, since \[\|f-f_q\|^2_{\hh}=\left\|\sum_{n=0}^\8 a_n z^n (1-q^n)\right\|^2=\displaystyle{\sum_{n=0}^\8 |a_n(1-q^n)|^2},\]
 it follows that for all $ q\geq q_\epsilon$
  \begin{align*}  
  \|f-f_q\|^2_{\hh}= 
  =&\sum_{n=0}^N |a_n(1-q^n)|^2+\sum_{n=N+1}^\8 |a_n(1-q^n)|^2\\
  \leq& (1-q^N)\sum_{n=0}^N |a_n|^2 + \sum _{n=N+1}^\8 |a_n|^2 \\
  \leq & \frac{\epsilon}{2}+\frac{\epsilon}{2}=\epsilon. 
  \end{align*} This shows that $\|f-f_q\|^2_{\hh}\rightarrow 0$ as $q\rightarrow 1$. On the other hand, we have
  \begin{align*} 
 |Df(z)-Df_q(z)|=&\left|\sum_{n=1}^\8na_nz^{n-1}-\sum_{n=1}^\8na_nq^nz^{n-1}\right|=&\left|\sum_{n=1}^\8na_n(1-q^n)z^{n-1}\right|.
\end{align*}  
Similarly, we can  show that $\|Df-Df_q\|^2\rightarrow 0$ as $q$ approaches to $1$. 
\end{enumerate}
\end{proof}
\begin{definition} \label{ID1}
Define \[\z=\{f\in\sh : f(0)=0\}\]
\end{definition}
\begin{corollary} \label{ICO1}
$\z\subset \sh $ is a Banach algebra with the norm defined for $\sh$ and $\sh=[1]\bigoplus \z$, and hence \[\z= \overline{span}\{z^n:n\in \mathbb{N}\}.\]
\end{corollary}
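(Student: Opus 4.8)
The plan is to derive Corollary \ref{ICO1} directly from Proposition \ref{INP1}, treating it as a formal consequence rather than reproving anything from scratch. The three assertions to establish are: (1) $\z$ is a closed subalgebra of $\sh$, hence a Banach algebra in the inherited norm; (2) the direct-sum decomposition $\sh = [1] \oplus \z$, where $[1]$ denotes the one-dimensional subspace of constant functions; and (3) the identification $\z = \overline{\operatorname{span}}\{z^n : n \in \mathbb{N}\}$, closure taken in the $\sh$-norm.

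For (1), I would note that $\z$ is the kernel of the evaluation functional $f \mapsto f(0)$, which is bounded on $\sh$ since $|f(0)| \le \|f\|_{\hh} \le \|f\|_{\sh}$; hence $\z$ is a closed subspace. It is closed under pointwise multiplication because $(fg)(0) = f(0)g(0) = 0$ whenever $f(0) = g(0) = 0$, and by part (ii) of Proposition \ref{INP1} the product stays in $\sh$. Being a closed subspace of the Banach space $\sh$, it is itself complete, so $\z$ is a (non-unital) Banach algebra in the restricted norm. For (2), every $f \in \sh$ decomposes uniquely as $f = f(0)\cdot 1 + (f - f(0))$, the second summand lying in $\z$ since it vanishes at $0$; uniqueness and the fact that $1 \notin \z$ give $[1] \cap \z = \{0\}$, so the algebraic direct sum holds, and since both summands are closed the sum is topological.

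For (3), the inclusion $\overline{\operatorname{span}}\{z^n : n \ge 1\} \subseteq \z$ is immediate because each monomial $z^n$ with $n \ge 1$ vanishes at $0$ and $\z$ is closed. Conversely, given $f \in \z$, part (iii) of Proposition \ref{INP1} provides polynomials $p_k \to f$ in $\sh$-norm; subtracting the constant terms, $p_k - p_k(0) \to f - f(0) = f$, and each $p_k - p_k(0)$ lies in $\operatorname{span}\{z^n : n \ge 1\}$, giving the reverse inclusion. I expect essentially no obstacle here: the only point requiring a line of care is confirming that $p_k(0) \to f(0) = 0$, which follows from continuity of evaluation at $0$ (so the correction terms do not spoil convergence), and that the $\sh$-norm used is genuinely the one in which polynomials are dense — consistent with the convention fixed in the Remark above that we do not distinguish $\|\cdot\|_{\sh}$ from its equivalent Banach-algebra norm.
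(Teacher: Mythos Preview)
Your proof is correct and follows essentially the same approach as the paper's: showing closure under products via $(fg)(0)=0$, closedness of $\z$ via continuity of point evaluation at $0$ (the paper uses the equivalent sequential formulation), and deriving the decomposition and span identity from the definition together with the polynomial density in Proposition~\ref{INP1}(iii). If anything, your argument is more explicit than the paper's on the third point---the paper simply asserts that the decomposition and span characterization ``follow from the Definition,'' whereas you spell out the polynomial-approximation step.
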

\begin{proof} For any $f$ and $g$ in $\z\subset \sh$, we immediately see that \[\|fg\|_{\sh}\leq\|f\|_{\sh}\|g\|_{\sh}.\]Also, we have $f(0)=0$ and $g(0)=0$ so $(fg)(0)=0$ and  hence $fg$ belongs to $\z$.
To show $\z$ is a closed subalgebra of $\sh$, assume $g\in\overline{\z}$. That means there exists a sequence $g_n\in\z$, $n\in\mathbb{N}$ such that $g_n$ converges to $g$ in $\sh$ norm. This implies $g_n$ converges to $g$ in $\hh$ norm. Since $g_n(0)=0$ for all $n$, it follows that $g(0)=0$.  Moreover, the other part of the corollary follows from the Definition \ref{ID1}.   
\end{proof}
The following theorem plays important role in the proof of main theorem.
\begin{theorem}
Let $\V$ be the Volterra operator on $\hh$. Then the following statements are true:
\renewcommand\theenumi{\roman{enumi}}
\begin{enumerate}
\item  Range of $\V= \z$.
\item $\V$ is a bounded isomorphism from $\hh$ onto $\z$, and its inverse is $D$.
\item The operator $T$ acting on $\hh$ is similar under $V$ to the multiplication operator $M_z$ acting on $\z$. 
\end{enumerate}
\end{theorem}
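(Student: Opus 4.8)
The plan is to prove the three parts in order, as each feeds into the next.

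\textbf{Part (i).} For the inclusion $\mathrm{Ran}\,\V \subseteq \z$, given $f \in \hh$ put $g = \V f$; then $g(0) = 0$, and differentiating under the integral sign gives $Dg = f \in \hh$, so $g \in \sh$ with $g(0)=0$, i.e. $g \in \z$. For the reverse inclusion, if $g \in \z$ then $Dg \in \hh$ and, by the fundamental theorem of calculus applied to the holomorphic function $g$ along the segment $[0,z] \subset \D$, one has $\V(Dg)(z) = \int_0^z Dg(w)\,dw = g(z) - g(0) = g(z)$; hence $g \in \mathrm{Ran}\,\V$.

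\textbf{Part (ii).} Injectivity of $\V$ is immediate, since $\V f = 0$ forces $f = D\V f = 0$; surjectivity onto $\z$ is exactly part (i); and the identity $\V(Dg) = g$ just obtained shows that $D|_{\z}$ is a two-sided inverse of $\V$. For the norm bounds I compute directly from the definition \eqref{IN1}:
\[\|\V f\|^2_{\sh} = \|D\V f\|^2_{\hh} + \|\V f\|^2_{\hh} = \|f\|^2_{\hh} + \|\V f\|^2_{\hh} \le 2\|f\|^2_{\hh},\]
using that $\V$ is a contraction on $\hh$; conversely $\|Dg\|^2_{\hh} \le \|Dg\|^2_{\hh} + \|g\|^2_{\hh} = \|g\|^2_{\sh}$, so $D\colon \z \to \hh$ is bounded too. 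Hence $\V$ is a bounded isomorphism of $\hh$ onto $\z$ with bounded inverse $D$.

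\textbf{Part (iii).} The heart of the matter is the operator identity $\V T = M_z \V$ on $\hh$; once it is established, part (ii) yields $T = \V^{-1} M_z \V$, which is precisely the assertion that $T$ on $\hh$ is similar, via $\V$, to $M_z$ on $\z$ (and en route it shows $M_z = \V T \V^{-1}$ does map $\z$ into itself). To prove the identity, fix $f \in \hh$ and set $F = \V f$, so that $F' = f$ and $F(0) = 0$. Then
\[\V(Tf)(z) = \int_0^z \big(w f(w) + F(w)\big)\,dw,\]
and integration by parts gives $\int_0^z w f(w)\,dw = \int_0^z w F'(w)\,dw = zF(z) - \int_0^z F(w)\,dw$; the two copies of $\int_0^z F(w)\,dw$ cancel, leaving $\V(Tf)(z) = zF(z) = (M_z \V f)(z)$, as needed.

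I do not expect a genuine obstacle here. The only steps that deserve a word of justification are differentiating under the integral sign and the integration by parts in part (iii), both standard for holomorphic integrands on the segment $[0,z] \subset \D$; everything else is bookkeeping with the norms \eqref{HN1}, \eqref{IN1} and Definition \ref{ID1}.
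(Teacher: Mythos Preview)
Your proof is correct and essentially mirrors the paper's: parts (i) and (ii) are virtually identical, and in part (iii) your integration-by-parts computation is the same identity the paper obtains by writing $Tf = D(z\cdot \V f)$ via the product rule and then applying $\V D = \id$ on $\z$. The two derivations of $\V T = M_z \V$ differ only in presentation, not in substance.
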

\begin{proof}
\renewcommand\theenumi{\roman{enumi}}
\begin{enumerate}
\item Let $g$ be in the range of $\V$, then there exists $ f\in\hh$ such that 
\[\displaystyle{g(z)= (\V f)(z)=\int_0^z f(w)dw}\] 
then  $Dg=f\in \hh$ and $g(0)=0$. Hence $g\in \z$.
Conversely, suppose that $g$ belongs to $ \z$, then 
\[\displaystyle{(\V Dg)(z) = \int_0^z (Dg)(w)dw =g(z)-g(0)=g(z)}.\]
 Therefore $g$ belongs to the range of $\V$. 
\item First we want to show $\V$ is a bounded operator on $\hh$. Let us assume $f$ is in the Hardy space. 
 \begin{align*}
\|\V (f)\|_{\sh}=&\|\V (f)\|_{\hh}+\|D(\V (f))\|_{\hh}\\
\leq &\|f\|_{\hh} +\|f\|_{\hh}\\=&2\|f\|_{\hh}.
\end{align*} 
Hence the map $\V$ from $\hh$ onto $\z$ is bounded. Clearly $\V$ is linear. Now to show $\V$ is one-one, assume that $f_1$ and  $f_2$ belong the Hardy space, and also assume that  \[ \int_0^z f_1(w)dw=\int_0^z f_2(w)dw, \hspace{1cm} \text{for~all}~ z \in\D.\] Differentiating both sides we see that $f_1=f_2$ and hence $\V$ is one-one. From part $(i)$ we have $\V Dg=g$ and clearly $D\V f=f$. This shows that $\V$ is a bounded bijective linear operator from $\hh$ onto $\z$ and $\V^{-1}=D$.
\item Suppose $f$ belongs to $\hh$ and also suppose $\V f =g$, for some $g\in \z$. Therefore we have $f(z)=(\V ^{-1}g)(z)= (Dg)(z)$. 
\begin{align*}
(Tf)(z)
=&zf(z)+(\V f)(z)\\
=&z(Dg)(z)+g(z)\\
=&D(zg(z)).\end{align*}
Now applying $\V$ on the both side, we see that
\begin{align*}
(\V Tf)(z)=&\V D(zg(z))\\=&zg(z)\\=&z(\V f)(z).
\end{align*}\end{enumerate}
So, $\V T=M_z\V$ and $\V T\V^{-1}=M_z$. 
 That is to say $\V$ transforms the operator $T$ into the operator multiplication by $z$ on $\z$.
 \end{proof}
 
 We can summarize the theorem by the following commutative diagram
\begin{center}
\begin{tikzpicture}
\coordinate[label=above :$\z$](A) at (6,-.2);
\coordinate[label=below:$\z$] (B) at (6,4.2);
\coordinate[label= below:$\hh$] (C) at (0,4.2);
\coordinate[label=above:$\hh$] (D) at (0,-.2);
\draw[thick,->] (6,3.2)--(6,0.8);
\draw[thick,->] (0.8,4)--(5.2,4);
\draw[thick,->] (0,3.2)--(0,0.8);
\draw[thick,->] (0.8,0)--(5.2,0);
\draw[thick,->] (.8,3.2)--(5.2,.8);
\draw (6,0.8) -- (6,3.2) node[right,midway]{$M_z$ };
\draw (5.2,4) -- (0.8,4) node[above,midway]{$\V$ };
\draw (0.8,0)--(5.2,0) node[below,midway] {$ \V$ };
\draw (0,3.2)--(0,0.8) node[left,midway] {$T$ };
\draw (.8,3.2)--(5.2,.8) node[above,midway] {$\V T=M_z\V$ };
\end{tikzpicture}
\end{center}

\begin{definition} An element $a$ in Banach algebra $\mathcal{A}$ is called cyclic if the subalgebra generated by $a$ is dense in $\mathcal{A}$. 
\end{definition}

The next proposition provides the relation between closed invariant subspaces of multiplication operator and closed ideals of a Banach algebra. Proof of this proposition can be found in \cite{MO10}.
\begin{proposition} \label{IPR2}  Let $\mathcal{A}$ be a Banach algebra. Then the invariant subspaces of multiplication by a cyclic element are exactly the closed ideals of $\mathcal{A}$.
\end{proposition}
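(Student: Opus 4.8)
The plan is to prove the two inclusions ``closed ideal $\Rightarrow$ invariant subspace'' and ``invariant subspace $\Rightarrow$ closed ideal'' separately, using that $a$ is cyclic. Throughout, $M_a$ denotes multiplication by $a$ on $\mathcal{A}$, and I write $\langle a\rangle$ for the closed subalgebra generated by $a$, which by hypothesis equals $\mathcal{A}$.

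First, suppose $I\subset\mathcal{A}$ is a closed ideal. Then for any $x\in I$ we have $ax\in I$, so $M_a(I)\subset I$; thus every closed ideal is automatically an $M_a$-invariant closed subspace, with no cyclicity needed. Second, and this is the substantive direction, suppose $W\subset\mathcal{A}$ is a closed subspace with $M_a(W)\subset W$. Then $a^n x\in W$ for every $x\in W$ and every $n\ge 1$, and since $W$ is closed and linear it follows that $p(a)x\in W$ for every polynomial $p$ with $p(0)=0$. The point is to upgrade this to $bx\in W$ for \emph{every} $b\in\mathcal{A}$: because $a$ is cyclic, the set $\{p(a):p\text{ a polynomial}\}$ is dense in $\mathcal{A}$, so given $b\in\mathcal{A}$ pick polynomials $p_k$ with $p_k(a)\to b$ in $\mathcal{A}$; then, using joint continuity of multiplication (equivalently, $\|p_k(a)x-p_j(a)x\|\le C\|p_k(a)-p_j(a)\|\,\|x\|$ in the Banach-algebra norm), we get $p_k(a)x\to bx$, and each $p_k(a)x$ lies in $W$ once we absorb the constant term. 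Since $W$ is closed, $bx\in W$. Hence $W$ is an ideal, and it is closed by assumption.

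The one genuine subtlety is the treatment of the constant term: $p_k(a)x$ a priori lies in $W$ only when $p_k(0)=0$, since the argument that $p(a)x\in W$ used $a^nx\in W$ for $n\ge1$. So I would handle this by writing $p_k = p_k(0) + q_k$ with $q_k(0)=0$, and arguing in two cases according to whether $x$ itself can be produced inside $W$ from the unit, or more cleanly by approximating $b$ by polynomials $q_k$ with $q_k(0)=0$: since constants are $\|\cdot\|_{\mathcal{A}}$-dense-complemented and $a$ cyclic means even polynomials vanishing at $0$ together with scalars are dense, one checks that in fact $\{q(a):q(0)=0\}$ is dense in the (closed) ideal generated by $a$, which is all one needs since any ideal containing $a$ contains $\overline{a\mathcal{A}}$. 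I expect this bookkeeping around the unit and the constant term to be the main obstacle; everything else is the standard density-plus-continuity argument. Once both inclusions are established, the invariant subspaces of $M_a$ and the closed ideals of $\mathcal{A}$ coincide as claimed, and since the reference \cite{MO10} contains the details, I would present this as a sketch and refer the reader there for the complete verification.
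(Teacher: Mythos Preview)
The paper does not actually prove this proposition; it simply refers the reader to \cite{MO10}. So there is no in-paper argument to compare against, and your sketch already goes further than the paper itself.

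Your outline is correct, but you are making the constant-term issue harder than it needs to be. Once $x\in W$ and $W$ is a \emph{linear subspace}, the scalar multiple $p(0)\cdot x$ lies in $W$ automatically. Hence for \emph{any} polynomial $p$,
\[
p(a)x \;=\; p(0)\,x \;+\; \bigl(p(a)-p(0)\bigr)x \;\in\; W,
\]
since the first summand is a scalar multiple of $x\in W$ and the second lies in $W$ by your $M_a$-invariance argument. There is no case split, no need to approximate only by polynomials with $p(0)=0$, and no appeal to ``dense-complemented constants''; density of $\{p(a)\}$ in $\mathcal{A}$ together with continuity of multiplication gives $bx\in W$ for every $b\in\mathcal{A}$ directly. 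The entire paragraph about producing $x$ from the unit can be removed.

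One small caveat worth noting if you write this up in full generality: the argument as given shows $W$ is a \emph{left} ideal. The conclusion ``closed ideal'' in the proposition tacitly assumes commutativity (which holds in the intended application to $\z$) or that ``ideal'' is read as one-sided.
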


\begin{lemma} \label{ILe1}
$J$ is a closed ideal of $\z$ if and only if $J$ is an ideal of $\sh$ contained in $\z$.
\end{lemma}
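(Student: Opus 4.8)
The statement is an equivalence about closed ideals of the non-unital Banach algebra $\z$ versus ideals of the unital Banach algebra $\sh$ that happen to sit inside $\z$. The plan is to prove each implication separately, using only elementary algebra together with Corollary~\ref{ICO1}, which tells us that $\sh=[1]\oplus\z$ as a direct sum and that $\z$ is a closed subalgebra of $\sh$ carrying the same norm. The key structural fact I will exploit is that every $f\in\sh$ can be written uniquely as $f=f(0)\cdot 1 + f_0$ with $f_0=f-f(0)\in\z$, and in particular the product $fg$ for $f\in\sh$, $g\in J\subset\z$ decomposes as $fg = f(0)\,g + f_0\,g$, where both terms must be controlled.

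For the direction ($\Leftarrow$): assume $J$ is an ideal of $\sh$ with $J\subset\z$; I must show $J$ is a closed ideal of $\z$. Closedness of $J$ as a subset of $\z$ is immediate since $\z$ carries the restriction of the $\sh$-norm and $J$ is assumed closed in $\sh$ (or, if ``ideal'' here already presupposes closed, this is automatic); the subtle point, if any, is that an ideal of $\sh$ need not a priori be closed, so I should either assume closedness is built into the hypothesis or remark that closed ideals correspond to closed ideals. The ideal property is then trivial: for $g\in\z$ and $h\in J$, since $\z\subset\sh$ we have $gh\in J$ because $J$ is an $\sh$-ideal, and $gh\in\z$ since $J\subset\z$; so $J$ absorbs multiplication by $\z$, i.e. $J$ is an ideal of $\z$.

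For the direction ($\Rightarrow$): assume $J$ is a closed ideal of $\z$; I must show $J$ is an ideal of $\sh$ (it is already contained in $\z$ by hypothesis, and already closed). Take $f\in\sh$ and $g\in J$. Write $f = f(0)\cdot 1 + f_0$ with $f_0\in\z$. Then $fg = f(0)g + f_0 g$. The first term $f(0)g$ lies in $J$ because $J$ is a linear subspace closed under scalar multiplication. The second term $f_0 g$ lies in $J$ because $f_0\in\z$ and $J$ is an ideal of $\z$. Hence $fg\in J$, so $J$ absorbs all of $\sh$ and is an ideal of $\sh$ contained in $\z$.

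The argument is essentially bookkeeping with the unitization decomposition $\sh=[1]\oplus\z$, so there is no serious analytic obstacle. The one point that deserves care — and the place I would be most careful in the write-up — is the role of the word ``closed'': I should make sure the hypotheses are phrased so that closedness transfers correctly in both directions (a closed ideal of $\z$ is closed in $\sh$ since the norms agree and $\z$ is closed in $\sh$ by Corollary~\ref{ICO1}, and conversely a closed ideal of $\sh$ contained in $\z$ is closed in $\z$ for the same reason). Once that is settled, the ideal-absorption statements follow immediately from $\z\subset\sh$ and the direct-sum splitting.
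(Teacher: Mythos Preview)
Your proposal is correct and follows essentially the same route as the paper: both use the splitting $\sh=[1]\oplus\z$ from Corollary~\ref{ICO1} to write an arbitrary $h\in\sh$ as $c+h_1$ with $h_1\in\z$, and then observe that $hj=cj+h_1j\in J$ for $j\in J$. Your treatment is in fact a bit more careful than the paper's about the word ``closed'' (the paper's statement omits it on the $\sh$ side but tacitly assumes it), which is a reasonable point to flag.
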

\begin{proof}
Using of Corollary \ref{ICO1}, we see that for any $h\in \sh$ there exists $h_1\in\z$ such that $h=c+h_1$.

 Suppose $J$ is a closed ideal of $\z$, then
 for any $h\in\sh$ and $j\in J$,
\[hj =(c+h_1)j=cj+h_1j \in J.\]
Since norm on both spaces are the same, $J$ is a closed ideal of $\sh$.
 On the other hand, if $J$ is an ideal of $\sh$ contained in $\z$, then it is clear that $J$ is a closed ideal of $\z$.
\end{proof}
  
 \begin{definition}\label{def3}
 Let $K$ be a closed subset of the unit circle $\partial\D$. For and inner function $G$ we say $G$ is \textit{associated with} $K$ if 
\begin{itemize}
\item if $a_1,a_2,...$ are the zeros of $G(z)$ in the open disk, then all the limit points of $\{a_k\}$ belong to $K$;
\item the measure determining the singular part of $G$ is supported on $K$.
\end{itemize}
 \end{definition}
 
 The following theorem from Korenbljum \cite{KO72} characterizes all closed ideals of $\sh$. This theorem is essential in proving our main theorem.
 \begin{theorem} \label{KOth11}  Suppose $K$ is a is closed subset of $\partial \D$ and let $G$ be an inner function associated with $K$. Let $I_{\sh}(G;K)$ be the set of all  $f\in\sh$ which are divisible by $G$ and which vanish on $K$. Then $I_{\sh}(G;K)$ is a closed ideal of $\sh$. Moreover, every closed ideal of $\sh$ is obtained in this manner.
\end{theorem}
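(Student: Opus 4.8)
The plan is to establish the two assertions of the theorem in turn, the first being routine and the second carrying all the weight. First I would check that $I_{\sh}(G;K)$ is a closed ideal. The ideal property is immediate from Proposition~\ref{INP1}(ii): if $f\in\sh$ is divisible by the inner function $G$ and vanishes on $K$, and $h\in\sh$, then $hf\in\sh$ is still divisible by $G$ and still vanishes on $K$. For closedness I would use two elementary embeddings of $\sh$. On one hand, for $f=\sum a_nz^n\in\sh$ one has $\sum n^2|a_n|^2<\8$, hence $\sum|a_n|<\8$ by Cauchy--Schwarz, so $\sh$ sits continuously inside the disk algebra and $\sh$-convergence forces uniform convergence on $\overline{\D}$; thus $\{f\in\sh:f|_K=0\}$ is closed. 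On the other hand $\sh\subset\hh$ continuously, and if $f_n\to f$ in $\hh$ with $G\mid f_n$ for all $n$, then $f_n/G$ is bounded in $\hh$ (since $|G|=1$ a.e.\ on $\partial\D$) and converges locally uniformly in $\D$ to $f/G$, so $G\mid f$; hence $\{f\in\sh:G\mid f\}$ is closed. Intersecting, $I_{\sh}(G;K)$ is a closed ideal.

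For the converse, let $J\neq\{0\}$ be a closed ideal of $\sh$. I would attach to $J$ the closed set $K_J=\{\zeta\in\partial\D:f(\zeta)=0\text{ for all }f\in J\}$ (closed because $\sh\subset A(\D)$) and the inner function $G_J=\gcd\{\text{inner part of }f:f\in J,\ f\not\equiv0\}$, i.e.\ the largest inner function dividing every element of $J$. Using the definitions of $G_J$ and $K_J$ together with standard facts about the boundary behaviour of inner functions, one verifies that $G_J$ is associated with $K_J$ in the sense of Definition~\ref{def3}: a limit point of the zeros of $G_J$, or a point in the support of its singular measure, that lay off $K_J$ would have to be ``seen'' by the inner part of every member of $J$, contradicting the existence of an $f\in J$ not vanishing there. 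By construction $J\subseteq I_{\sh}(G_J;K_J)$, so the whole problem reduces to the reverse inclusion $I_{\sh}(G_J;K_J)\subseteq J$.

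The reverse inclusion is the real content, and here I would follow the Beurling--Rudin scheme in the form worked out by Korenbljum. The first move is to divide out the common inner factor: show that $J_0=\{f/G_J:f\in J\}$ is again a closed ideal of $\sh$, with trivial common inner divisor and boundary hull still $K_J$. This step is delicate, because for a singular inner $G_J$ the derivative $G_J'$ is unbounded, so $f\mapsto f/G_J$ need not even map $\sh$ into $\sh$ in general; one must use precisely that $G_J$ is \emph{associated with} $K_J$, so that the singular mass of $G_J$ sits on $K_J$, where the functions of $J$ already vanish, forcing the quotients back into $\sh$ with controlled norm. The second move, once $J_0$ has trivial common inner part, is to identify $J_0$ with $\{f\in\sh:f|_{K_J}=0\}$: given such an $f$, one approximates it in $\sh$ by products $fg_n$ with $g_n\in J_0$, where the $g_n$ are chosen outer, bounded by $1$, vanishing on neighbourhoods of $K_J$ that shrink to $K_J$, and tending to $1$ locally uniformly off $K_J$; the existence of such multipliers with $\sh$-norms under control, together with the estimate $\|f-fg_n\|_{\sh}\to0$, is the technical heart, resting on Rudin's description \cite{RU57} of the closed ideals of $A(\D)$ and on the smoothness estimates peculiar to $\sh$.

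I expect the main obstacle to be exactly these last two moves. Producing approximating outer multipliers that simultaneously (i) annihilate the prescribed closed boundary set $K_J$, (ii) preserve the inner-factor-free ideal $J_0$, and (iii) remain bounded in the $\sh$-norm — which controls $\|Df\|_{\hh}$ and is therefore sensitive to any boundary smoothing — is the crux of \cite{KO72}. By contrast, the verification that $I_{\sh}(G;K)$ is a closed ideal, and the construction of the pair $(G_J,K_J)$, are straightforward given Proposition~\ref{INP1} and the elementary inclusion $\sh\subset A(\D)$.
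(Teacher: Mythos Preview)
The paper does not prove this theorem at all: it is stated as Korenbljum's result \cite{KO72} and simply cited, with no argument given. So there is no ``paper's own proof'' to compare against; the paper treats Theorem~\ref{KOth11} as a black box imported from the literature.

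Your proposal is a reasonable outline of how Korenbljum's proof actually proceeds (the Beurling--Rudin scheme adapted to the weighted space $\sh$), and your identification of the two genuinely hard steps --- showing that $J_0=J/G_J$ remains a closed ideal of $\sh$, and constructing outer multipliers in $\sh$ that approximate the identity off $K_J$ while vanishing on $K_J$ --- is accurate. You are right that the first direction (that $I_{\sh}(G;K)$ is a closed ideal) is routine given $\sh\subset A(\D)$, and your argument for that inclusion via $\sum|a_n|<\infty$ is cleaner than what the paper does in Proposition~\ref{INP1}(i). But note that what you have written is, by your own admission, a sketch that defers the technical core to \cite{KO72}; in that sense it is not really more than what the paper does, namely point to Korenbljum. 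If the goal was to reproduce a self-contained proof, the approximation step with $\sh$-bounded outer multipliers would need to be carried out in full, and that is a substantial piece of work.
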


\begin{corollary}
\label{cor1}
 Suppose $K$ is a is closed subset of $\partial \D$ and let $G$ be an inner function associated with $K$.
Let $I_{\z}(G;K)$ be the set of all  $f\in\z$ which are divisible by $G$ and which vanish on $K$. Then $I_{\z}(G;K)$ is a closed ideal of $\z$. Moreover, every closed ideal of $\z$ is obtained in this manner.

\end{corollary}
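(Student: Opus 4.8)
The plan is to deduce Corollary \ref{cor1} directly from the combination of Theorem \ref{KOth11} (Korenbljum's description of the closed ideals of $\sh$) and Lemma \ref{ILe1} (which identifies closed ideals of $\z$ with ideals of $\sh$ that happen to lie inside $\z$). The key observation is that the subspace $\z$ is itself exactly one of Korenbljum's ideals, namely $I_{\sh}(z;\emptyset)$: a function in $\sh$ is divisible by the inner function $G(z)=z$ precisely when it vanishes at $0$, so $I_{\sh}(z;\emptyset)=\z$. Since $z$ is an inner function whose only zero is at the origin (no limit points on $\partial\D$, no singular part), the empty set works as its associated closed set $K$.

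First I would record the easy direction: for any closed $K\subset\partial\D$ and any inner function $G$ associated with $K$, the set $I_{\z}(G;K)$ equals $I_{\sh}(Gz;K)$, because imposing membership in $\z$ on top of divisibility by $G$ is the same as requiring divisibility by $Gz$ (one uses that $Gz$ is again inner, and that $Gz$ is associated with $K$ since multiplying by $z$ only adds the zero at the origin, which contributes no new limit point on $\partial\D$). By Theorem \ref{KOth11}, $I_{\sh}(Gz;K)$ is a closed ideal of $\sh$, and it is contained in $\z$; so by Lemma \ref{ILe1} it is a closed ideal of $\z$. This shows every set of the stated form is a closed ideal of $\z$. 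Alternatively, and perhaps more cleanly, one argues that $I_{\z}(G;K)=I_{\sh}(G;K)\cap\z$ is an ideal of $\sh$ contained in $\z$ and invokes Lemma \ref{ILe1} directly; being an intersection of two closed sets it is closed.

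For the converse — that every closed ideal of $\z$ arises this way — I would take a closed ideal $J$ of $\z$, use Lemma \ref{ILe1} to view it as a (closed) ideal of $\sh$ with $J\subseteq\z$, and apply Theorem \ref{KOth11} to write $J=I_{\sh}(H;K)$ for some closed $K\subset\partial\D$ and some inner $H$ associated with $K$. Since $J\subseteq\z$ every element of $J$ vanishes at $0$; because $J$ consists precisely of the $\sh$-functions divisible by $H$ and vanishing on $K$, and such functions always vanish at $0$, the inner function $H$ must itself have a zero at the origin (otherwise one could produce an element of $I_{\sh}(H;K)$ not vanishing at $0$ — e.g. if $K\neq\partial\D$, the function $H$ times a suitable $\sh$ bump, or more simply $H$ itself when $K=\emptyset$). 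Writing $H=zG$ with $G$ inner and associated with $K$ (the factor $z$ removed, the remaining zeros and singular measure of $H$ are unchanged, so $G$ is still associated with $K$), one checks $I_{\sh}(H;K)=I_{\sh}(zG;K)=I_{\z}(G;K)$, giving the desired representation.

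The main obstacle, and the place deserving the most care, is the converse step: rigorously extracting the factor $z$ from the inner function $H$. One must argue that $J\subseteq\z$ forces $H(0)=0$, which requires knowing that $I_{\sh}(H;K)$ genuinely contains functions that are nonzero at $0$ whenever $H(0)\ne0$. When $K$ is a proper closed subset of $\partial\D$ this follows by multiplying $H$ by a function in $\sh$ that vanishes on $K$ but not at $0$ (such functions exist because $\sh$ is rich enough — e.g. one can use that the closed ideal $I_{\sh}(1;K)$ is nonzero and not contained in $\z$, which itself follows from Korenbljum's theorem applied to distinct ideals). The degenerate case $K=\partial\D$ forces every function in the ideal, and even in $\sh$ restricted to the boundary behavior dictated by $K$, to vanish everywhere on the circle, so $I_{\sh}(H;\partial\D)=\{0\}\subseteq\z$ trivially and is represented by $I_{\z}(G;\partial\D)$ for, say, $G=1$. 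I would handle this case separately at the outset to avoid cluttering the main argument.
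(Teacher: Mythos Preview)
Your converse argument takes an unnecessary detour and picks up a genuine error along the way. The identity $I_{\sh}(zG;K)=I_{\z}(G;K)$ that you rely on---both in your first approach to the forward direction and, crucially, at the end of the converse---fails whenever $G(0)=0$. Concretely, take $K=\emptyset$ and $H=z^{2}$, so that your $G=H/z=z$: then $I_{\sh}(z^{2};\emptyset)=\{f\in\sh:f(0)=f'(0)=0\}$, whereas $I_{\z}(z;\emptyset)$ consists of all $f\in\z$ divisible by $z$, which is all of $\z$. In general one only has $I_{\sh}(zG;K)\subseteq I_{\z}(G;K)$; the reverse inclusion requires $G(0)\neq 0$, and peeling a single factor of $z$ from $H$ does not ensure this.

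The paper's proof sidesteps all of this. Once Lemma~\ref{ILe1} and Theorem~\ref{KOth11} yield $J=I_{\sh}(H;K)$ with $J\subseteq\z$, one simply writes
\[
J=I_{\sh}(H;K)=I_{\sh}(H;K)\cap\z=I_{\z}(H;K),
\]
using the \emph{same} inner function $H$ and the definition $I_{\z}(H;K)=I_{\sh}(H;K)\cap\z$. No factoring, no argument that $H(0)=0$, no separate treatment of $K=\partial\D$: the obstacles you flag in your last paragraph are artifacts of the detour, not of the problem. Your ``alternative, cleaner'' route for the forward direction is exactly this intersection idea, and it is what you should have carried through to the converse as well.
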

\begin{proof}
Clearly $I_{\z}(G;K)$ is a closed ideal of $\z$. 
Conversely, suppose $J$ be any ideal of $\z$. From the Lemma \ref{ILe1}, $J$ is also an ideal of $\sh$ contained in $\z$. By the Theorem \ref{KOth11}, $J$ is of the form $I_{\sh}(G;K)$  for some $G$ and $K$ defined in Definition 3. Hence $J=I_{\z}(G;K)$.
\end{proof}

\begin{theorem} [{\bf Main Theorem}]
\label{Manin Theorem}
Let $T$ be an operator 
\[ (Tf)(z) =zf(z)+ \int_0^z f(w) dw.\] defined on $\hh$. Then the lattice of closed invariant subspaces is\\
\[Lat ~T=\{S \subset \hh :S=\{Df:f \in I_{\z}(G;K)\}~\text{for}~G, K~\text{defined~in~Definition \ref{def3}} \}.\] 
\end{theorem}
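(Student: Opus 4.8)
The plan is to transport the classification of invariant subspaces through the similarity established earlier. Recall from Theorem (the one summarized in the commutative diagram) that $\V T \V^{-1} = M_z$ on $\z$, so that $\V$ is a bounded isomorphism from $\hh$ onto $\z$ intertwining $T$ with $M_z$. The first step is therefore the bijection of lattices: $S \subset \hh$ is a closed $T$-invariant subspace if and only if $\V(S) \subset \z$ is a closed $M_z$-invariant subspace, with inverse map $S' \mapsto D(S') = \V^{-1}(S')$. I would verify that $\V$ and $D$ carry closed subspaces to closed subspaces (immediate from boundedness of both, by part (ii) of that Theorem) and that invariance is preserved in both directions (immediate from the intertwining relation). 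Hence $\mathrm{Lat}\,T = \{\,\V^{-1}(M) : M \in \mathrm{Lat}\, M_z \text{ on } \z\,\} = \{\,D(M) : M \in \mathrm{Lat}\, M_z \text{ on } \z\,\}$.

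**Identifying $\mathrm{Lat}\, M_z$ on $\z$.**

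The second step is to argue that $z$ is a cyclic element of the Banach algebra $\z$, so that Proposition \ref{IPR2} applies and the closed $M_z$-invariant subspaces of $\z$ coincide with the closed ideals of $\z$. Cyclicity follows from Corollary \ref{ICO1}: the subalgebra generated by $z$ consists of all polynomials vanishing at $0$, whose closure is $\overline{\mathrm{span}}\{z^n : n \in \mathbb{N}\} = \z$. The third step then invokes Corollary \ref{cor1}, which classifies the closed ideals of $\z$ exactly as the sets $I_{\z}(G;K)$ for $K$ a closed subset of $\partial\D$ and $G$ an inner function associated with $K$ in the sense of Definition \ref{def3}. Combining steps one through three gives $\mathrm{Lat}\,T = \{\,D(I_{\z}(G;K)) : G, K \text{ as in Definition \ref{def3}}\,\} = \{\,\{Df : f \in I_{\z}(G;K)\} : G, K \text{ as in Definition \ref{def3}}\,\}$, which is the claimed formula.

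**Where the work really lies.**

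Since the heavy machinery — Korenbljum's ideal theorem and its Corollary \ref{cor1}, the similarity $\V T \V^{-1} = M_z$, and the abstract Proposition \ref{IPR2} — is already available, the final theorem is essentially an assembly of these pieces, and I do not expect a genuine obstacle. The one point deserving care is the cyclicity of $z$ in $\z$: one must confirm that the \emph{closed subalgebra} generated by $z$ (not just the closed ideal, nor the closed linear span) is all of $\z$. This is true because the subalgebra generated by $z$ already contains every monomial $z^n$ with $n \geq 1$, hence every polynomial with zero constant term, and Corollary \ref{ICO1} identifies the closure of these with $\z$; so the distinction between "subalgebra generated by" and "linear span of powers" is harmless here. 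A secondary bookkeeping point is to make sure the two descriptions $\V^{-1}(M)$ and $D(M)$ genuinely agree — which is exactly the statement $\V^{-1} = D$ from part (ii) of the Theorem — so that the lattice can be written in the differentiated form appearing in the statement.
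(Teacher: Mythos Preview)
Your proposal is correct and follows essentially the same route as the paper: establish the similarity $\V T\V^{-1}=M_z$, use cyclicity of $z$ in $\z$ together with Proposition~\ref{IPR2} to identify $\mathrm{Lat}\,M_z$ with the closed ideals of $\z$, invoke Corollary~\ref{cor1} to classify those ideals as $I_{\z}(G;K)$, and then transport back via $\V^{-1}=D$. If anything, you are slightly more explicit than the paper about why the lattice bijection preserves closedness and about the subalgebra-versus-span distinction in checking cyclicity.
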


\begin{proof}
From Corollary \ref{ICO1}, \[\z= \overline{span}\{z^n:n\in \mathbb{N}\},\] so $z$ is a cyclic element of the Banach algebra $\z$. Thus from the Proposition \ref{IPR2}, closed invariant subspaces of $M_z$ on $\z$ are exactly the closed ideals of $\z$. Using Corollary \ref{cor1}, the lattice of closed invariant subspace of $M_z$ acting on $\z$ is given by
\[Lat~ M_z=\{I_{\z}(G;K): G, K~ \text{defined~in~Definition \ref{def3}}\}.\]
Since $\V T\V^{-1}=M_z$, we see that
$\V^{-1} (I_{\z}(G;K))$ is a closed invariant subspace of $T$. From Theorem 1, we know that $\V^{-1} (f)=Df$. So, $S=\{Df:f \in I_{\z}(G;K)\}$ is a closed invariant subspace of $T$. Hence,
\[Lat ~T=\{S \subset \hh :S=\{Df:f \in I_{\z}(G;K)\}~\text{for}~G, K~\text{defined~in~Definition \ref{def3}} \}.\] 

\end{proof} 
\begin{remark}
One can notice that if $K$ is of Lebesgue arc length measure zero, then the ideal $I_{\z}(G;K)\}$ is the zero ideal.
\end{remark}

Department of Mathematics and Statistics, The University of Toledo, Toledo, Ohio,
{\it E-mail address: zeljko.cuckovic@utoledo.edu}\\
Department of Mathematics and Statistics, The University of Toledo, Toledo, Ohio,
{\it Email address: bpaudya@rockets.utoledo.edu}


\begin{thebibliography}{56}
 \bibitem{Al08} A. Aleman and B. Korenblum, \emph{Volterra invariant subspaces of $H^p$,} Bull. Sci. Math. 132 (2008).
 
\bibitem{Be48} A. Beurling, \emph{On two problems concerning linear transformations in Hilbert space,} Acta Math. 81, (1948).

\bibitem{CO13} C. C. Cowen, G. Gunatillake and E. Ko, \emph{Hermitian weighted composition operators and Bergman extremal functions}, Complex Anal. Oper. Theory 7 (2013), no. 1, 69–99. 

\bibitem{Ka04} Y. Katznelson, \emph{An introduction to harmonic analysis}, Third edition, Cambridge Mathematical Library, Cambridge University Press, Cambridge, (2004).

 \bibitem{KO72} B. I. Korenbljum, \emph{Invariant subspaces of the shift operator in a weighted Hilbert space,} (Russian) Mat. Sb. (N.S.) 89(131) (1972).
 
\bibitem{MO10} A. Montes-Rodriguez, M. Ponce-Escudero and S. A. Shkarin, \emph{Invariant subspaces of parabolic self-maps in the Hardy space,} Math. Res. Lett. 17 (2010).
 
\bibitem{RU57} W. Rudin, \emph{The closed ideals in an algebra of analytic functions}, Canad. J. Math. 9 (1957), 426-434. 

 \bibitem{Sa65} D. Sarason, \emph{A remark on the Volterra operator,} J. Math. Anal. Appl. 12 (1965).
 
 \bibitem{Sa74} D. Sarason, \emph{Invariant subspaces,} Topics in operator theory, Math. Surveys, No. 13, Amer. Math. Soc., Providence, R.I., (1974).
\end{thebibliography}
\end{document}